\setlist{itemsep=3pt,topsep=3pt,partopsep=2pt,parsep=2pt,leftmargin=30pt}
\setlist[2]{itemsep=2pt,topsep=0pt,partopsep=2pt,leftmargin=20pt,parsep=2pt}
\setlist[enumerate]{label*=\textup{\arabic*.}}
\tikzset{close/.style={near start,outer sep=-2pt}}
\tikzset{dot/.style={circle,fill=black,inner sep=0pt, minimum size=0.8pt}}
\tikzset{smallstate/.style={circle,fill=black,inner sep=0pt, minimum size=3pt}}
\renewrobustcmd*{\bibinitdelim}{\,}
\newtheorem{thm}{Theorem}[section]
\newtheorem{cor}[thm]{Corollary}
\newtheorem{lem}[thm]{Lemma}
\newtheorem{prop}[thm]{Proposition}
\theoremstyle{definition}
\newtheorem{defn}[thm]{Definition}
\theoremstyle{remark}
\newtheorem{rem}[thm]{Remark}
\numberwithin{equation}{section}
\newcommand{\aka}{a.k.a.~}     
\newcommand{\rk}{\mathrm{rk}}
\newcommand{\FF}{\mathbb{F}}
\newcommand{\trivial}{1}                
\newcommand{\normalcl}[1]{\langle \! \langle \hspace{1pt} #1 \hspace{1pt} \rangle \! \rangle}
\newcommand{\ff}{\ensuremath{\mathsf{ff}}}
\newcommand{\leqff}{\leqslant_{\ff}}
\newcommand{\isom}{\simeq}      
\newcommand{\Free}[1][]{%
\ifthenelse{\isempty{#1}}{\mathbb{F}}{\mathbb{F}_{\!#1}}
}
\newcommand{\Fn}{\Free[n]}
\newcommand{\defin}[1]{\emph{#1}\index{#1}}
\newcommand{\gen}[1]{\langle  #1 \rangle}
\newcommand{\normaleq}{ \trianglelefteqslant }
\newcommand{\xonto}[2][]{%
  \xrightarrow[#1]{#2}\mathrel{\mkern-14mu}\rightarrow
}
\newcommand{\onto}{\xonto{\ }}
\tikzset{every state/.style={
    inner sep=1pt,
    minimum size=4pt,
    fill=black
    }}
\newcommand\reallywidehat[1]{%
\savestack{\tmpbox}{\stretchto{%
  \scaleto{%
    \scalerel*[\widthof{\ensuremath{#1}}]{\kern-.6pt\bigwedge\kern-.6pt}%
    {\rule[-\textheight/2]{1ex}{\textheight}}
  }{\textheight}%
}{0.5ex}}%
\stackon[1pt]{#1}{\tmpbox}%
}
\newcommand{\set}[1]{\{  #1  \}}        
\newcommand{\CEPeq}{CEP-equipped} 
\newcounter{mallikacomments}
\newcounter{jordicomments}
\newcounter{enriccomments}
\title{Quotient-saturated groups}
\author{Jordi Delgado$^*$, Mallika Roy$^\dag$ and Enric Ventura$^*$}
\date{\vspace{-2pt}
    $^*$Department of Mathematics, Universitat Politècnica de Catalunya (UPC)\\%
    $^\dag$Department of Mathematics, Universidad del País Vasco (UPV/EHU)\\[3ex]%
    \today
}
\begin{document}

\maketitle

\begin{abstract}
We introduce the new notion of \emph{quotient-saturation} as a measure of the immensity of the quotient structure of a group. We present a sufficient condition for a finitely presented group to be quotient-saturated, and use it to deduce that non-elementary finitely presented subgroups of a hyperbolic group (in particular, non-elementary hyperbolic groups themselves) are quotient-saturated. Finally, we elaborate on the previous results to extend the scope of this property to finitely presented acylindrically hyperbolic groups.
\end{abstract}

\section{Introduction}\label{sec: intro}

The notion of quotient by a normal subgroup is central in the algebraic study of groups, starting from the foundational result that every group is (isomorphic to) a quotient of a free group. From this point of view, we can get a reasonable measure of the internal complexity of a group $G$ by looking at its lattice of quotients: the more quotients $G$ has, or the more intricate its lattice of quotients is, the richer the internal structure of $G$  will be. Of course, these kind of sentences are only at the intuitive level, and one needs to state precise definitions and prove concrete results in order to provide them with real mathematical meaning. One of the contributions in this direction was made by Higman,
introducing the notion of \mbox{SQ-universality}: a group~$G$ is \defin{SQ-universal} if any countable group can be embedded in some quotient of $G$. The classical Higman--Neumann--Neumann theorem states that the free group of rank 2 (and so, any non-abelian free group) is SQ-universal. 

In this paper we propose the new notion of \emph{quotient-saturated} group (see \Cref{def: qs} below), also conveying the idea that `$G$ has lots of quotients', and we provide a broad family of examples of such groups. This notion has been inspired by its dual, that of intersection-saturated group, previously introduced in~\cite{delgado_intersection_2023} by the same authors. 

Let $G$ be a group and let $\rk(G)$ denote its rank (i.e., the minimum cardinality of a generating set for~$G$). Of course, the obvious inequality $\rk(G/N)\leq \rk(G)$ is a first restriction for all quotients of $G$. In particular, any quotient of a finitely generated group is finitely generated as well. No analog of this fact works at the level of relations: a finitely related group (even the free group of rank 2) may have non finitely related quotients, and vice-versa, non finitely related groups may have finitely related quotients (the trivial quotient, for instance). We are interested in studying how rich a group $G$ could be in terms of the finite presentability of its quotients. We will restrict ourselves to finitely generated groups $G$, where being finitely related is equivalent to being finitely presented.

Consider a finitely generated group $G$ and its lattice of quotients $\mathcal{Q}_G$, paying attention at which are finitely presented and which are not. Intuitively, the more types of sublattices $\mathcal{Q}_G$ has, the more sophisticated the internal structure of $G$ will be. We will consider the extreme situation where \emph{all} abstract finite lattices (together with the information at each vertex whether we want that quotient to be finitely related or not) embed in $\mathcal{Q}_G$; we will refer to such a group $G$ as \defin{quotient-saturated}, see \Cref{def: qs} for the details.


The paper is organized as follows: in \Cref{sec: q-sat} we remind the notion of directed acyclic graph (DAG), we use it to establish the formal definition of quotient-saturated group; we also obtain a first partial result (\Cref{prop: rea}) providing the seed for our arguments to obtain finitely presented quotient-saturated groups. In \Cref{sec: main} we prove our main result (\Cref{thm: main}), providing a sufficient condition {(called CEP equipment, see~\Cref{def: CEPeq})} for a group to be quotient-saturated. As a consequence, we deduce that all non-elementary finitely presented subgroups of hyperbolic groups (in particular, non-elementary hyperbolic groups themselves) are quotient-saturated. As a nice corollary, this provides a new obstruction for a group to embed into a hyperbolic group. Finally, in \Cref{sec: acyl} we extend our previous analysis to showing that finitely presented acylindrically hyperbolic groups are all quotient-saturated as well.

Throughout the paper, we adopt the following notation. Given a group $G$, and elements $g,h \in G$, we denote  by $g^h=h^{-1}gh$ the conjugate of $g$ by $h$, and we extend this notation to subsets $R,S\subseteq G$ writing $R^S=\{r^s \mid r\in R,\,\, s\in S\}$. The normal closure of $R\subseteq G$ in $G$ is $\normalcl{R}_G =\gen{R^G}$, namely the smallest normal subgroup of $G$ containing $R$. For any cardinal $n\geq 0$, the free group of rank $n$ is denoted by $\Free[n]$ (or $\Free[A]$ if we want to specify a free basis $A$). 

\section{Quotient-saturated groups}\label{sec: q-sat}

Let $G$ be a finitely generated group and let $N_1,\ldots ,N_n\normaleq G$ be a finite collection of pairwise different normal subgroups of $G$. We can form a directed simple graph (i.e., with no loops and no parallel edges) with set of vertices $V=[n]=\{1,\ldots ,n\}$, and having a directed edge $(i,j)$ from $i$ to $j$ if and only if $N_i<N_j$ (in this note, we use the symbol $<$ to mean strict inclusion). Dually, we can think that vertex $i$ represents the quotient group $G/N_i$, and edge $(i,j)$ represents the proper canonical projection $\pi_{i,j} \colon G/N_i \onto G/N_j$ (proper because the normal subgroups $N_i<N_j$ are different; in other words, we do not consider loops at any vertex representing the identity $\operatorname{id}\colon G/N_i\to G/N_i$). Note that, by construction, this is a directed graph containing \emph{no non-trivial directed closed paths}.

We recall that a directed (simple) graph, or \defin{digraph}, is formally a pair $\Gamma =(V,E)$, where $V=V\Gamma$ is a nonempty set, called the \defin{set of vertices} of $\Gamma$, and $E=E\Gamma$ is a subset of $V\times V$, called the \defin{set of (directed) edges} of $\Gamma$. If $e=(p,q)$ then we say that $e$ is an edge from $p$ to $q$. The \defin{order} of a digraph is its number of vertices. We say that $\Gamma$ is \defin{finite} if both $V$ and $E$ are finite.

A \defin{(directed) path} in a digraph is an alternating sequence of the form $\gamma = v_0 e_1 v_1 e_2 v_2\cdots e_k v_k$, where the $v_i$'s are vertices, the $e_i$'s are edges, and $e_i=(v_{i-1}, v_i)$, for $i=1,\ldots ,k$. Then, we say that $\gamma$ is a path \defin{from $v_0$ to $v_k$} of \defin{length} $k$, namely the number of edges occurring in $\gamma$ (with possible repetitions). Paths of length zero can be identified with vertices, and are called \defin{trivial} paths. If the first and last vertex in a path coincide, we say that it is \defin{closed}. A \defin{directed acyclic graph} (a \emph{DAG}, for short) is a digraph containing no non-trivial closed paths (\aka cycles). Note that we are not requiring any kind of connectivity in the definition of DAG (so, any edgeless graph is, certainly, a~DAG). 

In any DAG $\Gamma$, the existence of a directed path between vertices defines a partial order in $V\Gamma$: for $u,v\in V\Gamma$, $u\leq_{\Gamma} v$ if and only if there is a directed path $v_0 e_1 v_1 e_2 v_2\cdots e_k v_k$ in $\Gamma$ from $u=v_0$ to $v=v_k$. Formally, this partial order is the reflexive-transitive closure of the relation describing adjacency in $\Gamma$. As usual, we write $u<_{\Gamma} v$ to mean $u\leq_{\Gamma} v$ but $u \neq v$. In both notations, $\leq_{\Gamma}$ and $<_{\Gamma}$, we will omit the subscript whenever the underlying DAG $\Gamma$ is clear.

Finally, a \defin{colored DAG} is a 3-tuple $\Gamma =(V,E, c)$ where $(V,E)$ is a DAG and $c\colon V\to \{0,1\}$ is a map assigning a 0/1 color to each vertex. 

\begin{defn}\label{def: qs}
Let $\Gamma =(V,E, c)$ be a colored DAG, and $G$ be a finitely generated group. A \defin{ (quotient) realization} of $\Gamma$ in $G$ is an assignment of a normal subgroup $N_v\normaleq G$ to each vertex $v \in V\Gamma$, 
in such a way that: 
 \begin{itemize}
\item[(i)] for any two distinct vertices $u\neq v$, we have $N_u\neq N_v$;
\item[(ii)] for any two vertices $u,v$, we have $u\leq v$ if and only if $N_u \leqslant N_v$;
\item[(iii)] for any vertex $v$, the quotient group $G_v=G/N_v$ is finitely presented if and only if $c(v)=0$.
 \end{itemize}
When such a realization exists we say that $\Gamma$ is \defin{realizable} in the group $G$, and that $G$ \defin{admits a realization} of $\Gamma$. Finally, a group $G$ is said to be \defin{quotient-saturated} if every finite colored DAG is realizable in $G$. 
\end{defn}

Note that a DAG $\Gamma=(V,E)$ is realizable if and only if its transitive closure is realizable. We emphasize that condition~(ii) is a double implication: for any pair of vertices, $u,v$, it requires that, whenever there is a directed path from $u$ to $v$ then $N_u \leqslant N_v$; but also vice-versa: if there is no directed path from $u$ to $v$ then $N_u$ \emph{must not} be a subgroup of $N_v$. Moreover, in any realization of $\Gamma$, for each vertex $v\in V$, the quotient group $G_v:=G/N_v$ is required to be finitely presented if $c(v)=0$, and not finitely-presented if $c(v)=1$. 

Alternatively, we can think that a realization attaches a quotient $G_v$ of $G$ to each vertex $v\in V$, being finitely presented or not according to the color $c(v)$, and in such a way that non-trivial directed paths represent proper projection maps. Of course, the connection among these dual points of view is $N_v =\ker (G\onto G_v )$. Restricting ourselves to the cases where $G$ is finitely presented, and using the following well-known fact, we can characterize the finite presentability of $G_v=G/N_v$ purely in terms of the normal subgroup $N_v$; this will be convenient later. 


\begin{lem}[Miller III, Thm 2.10 in~\cite{miller_iii_combinatorial_2004}]\label{lem: pres}
Let $\Fn$ be a finitely generated free group, and $R\subseteq \Fn$. If the quotient $\Fn/\normalcl{R}$ is finitely presented then there exists a finite subset $R_0\subseteq R$ such that $\normalcl{R_0}=\normalcl{R}$; in particular, $\Fn/\normalcl{R} =\Fn/\normalcl{R_0}$. \qed
\end{lem}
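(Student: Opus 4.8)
The plan is to lean on two standard facts: that finite presentability does not depend on the chosen finite generating set, and that the normal-closure operation commutes with directed unions. Write $F=\Fn$ and $N=\normalcl{R}$.

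First I would record the directed-union fact, which is the engine of the argument. By definition every element of $\normalcl{R}$ is a product of finitely many conjugates of elements of $R^{\pm 1}$, hence already lies in $\normalcl{R'}$ for some finite $R'\subseteq R$. Since moreover $R'\subseteq R''$ implies $\normalcl{R'}\subseteq\normalcl{R''}$, the family $\{\normalcl{R'} : R'\subseteq R \text{ finite}\}$ is directed (any two finite subsets are contained in their union) and $\normalcl{R}=\bigcup_{R'\subseteq R\text{ finite}}\normalcl{R'}$.

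Next I would produce a \emph{finite} presentation of $G=F/N$ on the very same free basis $\{x_1,\ldots,x_n\}$ as $F$. Since $G$ is finitely presented it admits some finite presentation $\pres{Y}{S}$ with $Y,S$ finite; and the images in $G$ of $x_1,\ldots,x_n$ form a finite generating set. Because finite presentability is independent of the choice of finite generating set, $G$ is finitely presented on $\{x_1,\ldots,x_n\}$ as well. Concretely, this is the usual Tietze argument: write each $x_i$ as a word in $Y$ and each $y\in Y$ as a word in the $x_i$, pass to the (finite) presentation on $X\cup Y$ with these extra relations adjoined to $S$, and then eliminate the now-redundant generators $Y$. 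The upshot is a finite set $S_0\subseteq F$ with $\normalcl{S_0}=N=\normalcl{R}$ inside $F$.

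Finally I would close with a sandwich. Each of the finitely many relators $s\in S_0$ lies in $\normalcl{R}$, so by the first step there is a finite $R_s\subseteq R$ with $s\in\normalcl{R_s}$. Put $R_0=\bigcup_{s\in S_0}R_s$, a finite subset of $R$; then $S_0\subseteq\normalcl{R_0}$, whence $\normalcl{R_0}\subseteq\normalcl{R}=\normalcl{S_0}\subseteq\normalcl{R_0}$, forcing equality throughout. Thus $\normalcl{R_0}=\normalcl{R}$ and in particular $F/\normalcl{R_0}=F/\normalcl{R}$. I expect the only genuinely delicate point to be the generator-independence of finite presentability used in the middle step: one must check that the Tietze moves eliminating the auxiliary generators $Y$ keep both the generating set and the relator set finite. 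The directed-union identity and the sandwich are then routine bookkeeping.
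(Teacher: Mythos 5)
Your proposal is correct and complete: the directed-union identity for normal closures, the generator-independence of finite presentability (B.\,H.~Neumann's Tietze-transformation argument), and the final sandwich are exactly the standard proof of this fact. The paper itself gives no proof, citing Miller III instead, and your argument is essentially the one found in that source, so there is nothing to flag.
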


Clearly, if $G_2$ is a quotient of $G_1$ then every colored DAG realizable in~$G_2$ is also realizable in~$G_1$. In particular, if $G_2$ is quotient-saturated then $G_1$ is quotient-saturated as well. Hence, the question about existence of quotient-saturated groups (of rank $n$) reduces to whether free groups (of rank $n$) are quotient-saturated. In the next section, as a consequence of our main result, it will follow that non-abelian free groups are, in fact, quotient-saturated. At the moment, we can prove a first partial result in this direction: every finite colored DAG is realizable in a free group of big enough rank. To this purpose, we shall use the well-known fact that free factors of free groups, $H\leqff \Free[n]$, satisfy the property $\normalcl{R}_H =H\cap \normalcl{R}_{\Free[n]}$, for every subset $R\subseteq H$ (see \Cref{rem: free factors} below). 

\begin{prop}\label{prop: rea} Any finite colored DAG of order $n$ is realizable in the free group $\FF_{2n}$.
\end{prop}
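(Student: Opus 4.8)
The plan is to realise $\Gamma$ inside the free product decomposition $\FF_{2n} = H_1 * \cdots * H_n$, where $H_w = \gen{x_w, y_w} \cong \FF_2$ is a rank-$2$ free factor attached to the vertex $w \in V\Gamma = [n]$. Each normal subgroup $N_v$ will be defined as the normal closure in $\FF_{2n}$ of a family of relators distributed block by block among the $H_w$. The key mechanism is that, for relators $R_w \subseteq H_w$, one has $\normalcl{\bigcup_w R_w}_{\FF_{2n}} \cap H_w = \normalcl{R_w}_{H_w}$, equivalently $\FF_{2n}/\normalcl{\bigcup_w R_w} \cong *_w\, (H_w/\normalcl{R_w}_{H_w})$; this is a direct consequence of the free-factor property $\normalcl{R}_H = H \cap \normalcl{R}_{\FF_{2n}}$ (for $H \leqff \FF_{2n}$) recalled above. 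Hence the contribution of $N_v$ to each block can be prescribed independently.

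For condition~(ii) I would arrange that $N_v$ touches the block $H_w$ nontrivially exactly when $w \leq v$: the relators defining $N_v$ live only in the blocks indexed by $w \leq v$, each receiving a relator with nontrivial normal closure, while $N_v \cap H_w = \Trivial$ whenever $w \not\leq v$. Granting a suitable nesting of the relators within each block, the forward implication of~(ii) follows from transitivity of $\leq$ (every block touched by $N_u$ is then touched, compatibly, by $N_v$), giving $N_u \leqslant N_v$. For the converse, if $u \not\leq v$ then $N_u$ touches $H_u$ nontrivially whereas $N_v \cap H_u = \Trivial$, so $N_u \not\leqslant N_v$. Condition~(i) is then automatic from~(ii) and the antisymmetry of the DAG order.

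The main obstacle is the colouring, together with its compatibility with the inclusion pattern. Assigning one fixed relator per block would be fatal: a block made non-finitely-presented at a low vertex $u$ would stay so at every $v \geq u$, obstructing any colouring with $c(u) = 1$ and $c(v) = 0$. The remedy is to let the relators in a block \emph{grow} with the level. For a vertex $w$ with $c(w) = 0$, the block $H_w$ receives the single relator $x_w$ at every level $v \geq w$, so the corresponding factor is $\ZZ$. For a vertex $w$ with $c(w) = 1$, I fix a set $S_w \subseteq H_w$ with $H_w/\normalcl{S_w}$ finitely generated but not finitely presented --- for instance relators presenting $\ZZ \wr \ZZ$ on the basis $\set{x_w, y_w}$ --- chosen moreover so that $S_w \subseteq \normalcl{x_w}_{H_w}$. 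At its own level $v = w$ the block receives $S_w$ (a non-finitely-presented factor), while at every strictly higher level $v > w$ it receives $S_w \cup \set{x_w}$, whose normal closure collapses to $\normalcl{x_w}_{H_w}$ and repairs the factor back to $\ZZ$.

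It remains to check that the normal closures so defined are nested along every chain $w \leq u \leq v$ (which is routine with the above choices) and to read off the colouring. At each level $v$, the only block that can fail to be finitely presented is the own block $H_v$, and it fails exactly when $c(v) = 1$; every lower block has been repaired to $\ZZ$ and every untouched block is free of rank $2$. Since a finite free product of nontrivial groups is finitely presented if and only if each factor is --- the factors being retracts, and \Cref{lem: pres} certifying that $H_w/\normalcl{S_w}$ is not finitely presented --- we conclude that $G_v = \FF_{2n}/N_v$ is finitely presented precisely when $c(v) = 0$, which is condition~(iii). I expect the real work to sit in the free-product intersection formula in exactly the form used, and in the bookkeeping that makes the `grow-then-repair' relators simultaneously respect the inclusions and deliver the prescribed colours.
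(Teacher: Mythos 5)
Your construction is correct, and it is a genuinely different packaging of the same underlying idea. Both proofs allocate two fresh generators per vertex and both rest on the same key fact (free factors of free groups are CEP, so relators placed blockwise in $\FF_{2n}=H_1*\cdots*H_n$ interact with each block exactly as prescribed). The difference is in the organization: the paper argues by induction on $|V\Gamma|$, peeling off a $\leq$-maximal vertex $w$ and, for each old vertex $u$, either killing both new generators (if $u\not<w$) or leaving them free (if $u<w$), while the new vertex's quotient kills \emph{all} old generators; the obstruction you identify (a non-finitely-presented block at $u$ persisting at every $v>u$) is thus avoided by total annihilation of earlier blocks. Your direct, non-inductive construction instead repairs a bad block only partially, adding the single relator $x_w$ so that $\normalcl{S_w\cup\{x_w\}}_{H_w}=\normalcl{x_w}_{H_w}$ collapses the factor to $\ZZ$; this requires the extra (and achievable, e.g.\ via the standard presentation of $\ZZ\wr\ZZ$ with relators $[x_w,x_w^{y_w^i}]\in\normalcl{x_w}_{H_w}$) condition $S_w\subseteq\normalcl{x_w}_{H_w}$, which the paper never needs. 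What your version buys is a single explicit description of every $N_v$ (its intersection with $H_w$ is trivial if $w\not\leq v$, is $\normalcl{x_w}_{H_w}$ if $w<v$ or $w=v$ with $c(v)=0$, and is $\normalcl{S_v}_{H_v}$ if $w=v$ with $c(v)=1$), at the price of invoking the fact that a finite free product of nontrivial finitely generated groups is finitely presented if and only if each factor is (via retracts); the paper's induction keeps each verification local to one new vertex. One presentational nit: your appeal to \cref{lem: pres} to certify that $H_w/\normalcl{S_w}$ is not finitely presented is really an appeal to the classical fact that $\ZZ\wr\ZZ$ is not finitely presented (no finite subset of the relators suffices), which you should cite as such.
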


\begin{proof}
Let us prove it by induction on $n\geq 1$. For $n=1$ the statement is true, since $\FF_2$ admits both finitely presented and not finitely presented quotients. 

Fix $n\geq 2$, suppose the result is true for colored DAG's with less than $n$ vertices, and let $\Gamma$ be a colored DAG with $n$ vertices. By finiteness, $\Gamma$ has a (not necessarily unique) $\leq$-maximal vertex; choose one of them, i.e., a vertex $w$ with no edges going out of it, and remove it from $\Gamma$ together with the edges arriving to it. By induction, the obtained DAG $\Gamma\setminus \{w\}$ admits a realization in $\FF_{2n-2}=\langle x_1, x_2,\ldots ,x_{2n-3},x_{2n-2}\rangle$ by a certain family of normal subgroups $N_{u} \normaleq \FF_{2n-2}, \ u\neq w$. In order to realize $\Gamma$ in $\FF_{2n}=\langle x_1, x_2,\ldots ,x_{2n-1},x_{2n}\rangle=\FF_{2n-2}*\langle x_{2n-1},x_{2n}\rangle$, consider the following assignment of normal subgroups of $\FF_{2n}$ to the vertices of $\Gamma$: 
 \begin{equation}\label{eq: Nprimes}
N'_{u}=\left\{ \begin{array}{ll} \normalcl{N_{u}, x_{2n-1}, x_{2n}}_{\FF_{2n}} &  \text{if } u\neq w\not> u, \\ \normalcl{N_{u}}_{\FF_{2n}} & \text{if } u\neq w> u, \\ \normalcl{x_1, \ldots ,x_{2n-2}, R}_{\FF_{2n}} & \text{if } u=w, \end{array}\right.
 \end{equation}
where $R=\{x_{2n-1}\}$ if $c(w)=0$, and $R=\ker (\langle x_{2n-1}, x_{2n} \rangle \onto L)$ if $c(w)=1$, with $L$ being a two-generated not finitely presented group. In other words, if $G_{u}=\FF_{2n-2}/N_{u}$ are the quotients given by the realization of $\Gamma \setminus \{w\}$ from the induction hypothesis, we are defining $G'_{u}=G_{u}$ if $u\not< w$, $G'_{u}=G_{u}*\langle x_{2n-1}, x_{2n}\rangle$ if $u<w$, and $G'_{w}=\langle x_{2n-1}, x_{2n} \mid R\rangle$ (isomorphic to $\mathbb{Z}$ if $c(w)=0$, and to $L$ if $c(w)=1$).

To see that this is a correct realization for $\Gamma$, let us prove properties (i), (ii), and (iii). Note that, since $\Free_{2n-2}\leqff \Free_{2n}$, we have $N'_{u}\cap \FF_{2n-2}=N_{u}$, for $u\neq w$; so, for any two old vertices $u,v\neq w$, $N'_{u}=N'_{v}$ implies $N_{u}=N_{v}$ which implies $u=v$. Also, if $u\neq w\not> u$ we have $x_{2n}\in N'_{u}$ while $x_{2n}\not\in N'_{w}$ (obviously if $c(w)=0$, and because $L$ would be cyclic and so finitely presented if $c(w)=1$ and $x_{2n} \in N'_{w}$); and if $u\neq w>u$, $N'_{u}$ contains no non-trivial word from $\gen{x_{2n-1}, x_{2n}}$, while $N'_{w}$ always does. Therefore, $N'_{w}\neq N'_{u}$ and (i) follows. 

To see (ii), take two old vertices $u,v\neq w$: by hypothesis, if $u\leq v$ then $N_{u} \leqslant N_{v}$ and so, $N'_{u} \leqslant N'_{v}$ (note that $u\neq w\not>u$ and $v\neq w>v$ cannot happen simultaneously); conversely, if $N'_{u} \leqslant N'_{v}$ then, intersecting with $\FF_{2n-2}$, we have $N_{u} \leqslant N_{v}$ and so, $u\leq v$. Moreover, if $u\neq w\not> u$ then $x_{2n}\in N'_{u}$ but $x_{2n}\not\in N'_{w}$ so, $N'_{u}\not\leqslant N'_{w}$; and if $u\neq w>u$ then $N'_{u}=\normalcl{N_{u}}_{\FF_{2n}}\leqslant \normalcl{x_1,\ldots ,x_{2n-2}}_{\FF_{2n}}< N'_{w}$. This completes the prove of (ii).

Finally, (iii) is clear: for any old vertex $u\neq w$, $G'_{u}=\FF_{2n}/N'_{u}$ is isomorphic to either $G_{u}$ (if $u\neq w\not>u$) or to $G_{u}*\langle x_{2n-1}, x_{2n}\rangle$ (if $u\neq w > u$); in both cases, $G'_{u}$ is finitely presented if and only if $G_{u}$ is so, which, by induction, happened if and  only if $c(u)=0$. On the other hand, by construction, $G'_{w}=\langle x_{2n-1}, x_{2n} \mid R \rangle$ is isomorphic to $\mathbb{Z}$, if $c(w)=0$, and to $L$ if $c(w)=1$; hence, it is finitely presented (in fact, infinite cyclic) if and only if $c(w)=0$. 

Thus, the normal subgroups defined in \eqref{eq: Nprimes} realize $\Gamma$ in $\FF_{2n}$, completing the induction step.  
\end{proof}

\begin{rem}
As a corollary of \Cref{prop: rea}, one is tempted to deduce that the free group of countable rank, $\FF_{\aleph_0}=\FF_{\{a_1, a_2,\ldots\}}$, is quotient-saturated. This intuition is correct, but there is a subtle detail to highlight here. We can generalize \Cref{def: qs} to non finitely generated ambient groups $G$ in two different ways: taking condition (iii) verbatim, or replacing it by: 
\begin{itemize}
\item[(iii')] for any vertex $v$, the quotient group $G_v=G/N_v$ is finitely related if and only if $c(v)=0$.
\end{itemize}
Both are equivalent when $G$ is finitely generated, but not equivalent otherwise. An easy adaptation of the proof of \Cref{prop: rea} shows that $\Free_{\aleph_0}$ is quotient-saturated in both senses: given a finite $DAG$ $\Gamma$ of order $|V\Gamma|=n$, realize $\Gamma$ in $\Free_{2n}\leqff \Free_{\aleph_0}$ by certain normal subgroups $N_v \normaleq \Free_{2n}$, $v\in V\Gamma$, and then replace them by $N'_v=\normalcl{N_v, x_{2n+1}, x_{2n+2},\ldots }_{\Free_{\aleph_0}} \normaleq \Free_{\aleph_0}$. Since each quotient $\Free_{\aleph_0}/N'_v$ is finitely generated, it will be finitely presented if and only if it is finitely related (which happens if and only if $c(v)=0$). For a general non finitely generated group $G$, the notions of realizability and  quotient-saturation may not coincide. 
\end{rem}

The main idea in the proof of \Cref{prop: rea} is to iteratively extend the realization from smaller to bigger DAG's, vertex by vertex, by using two fresh ambient generators to accommodate each new vertex. This has the inconvenient of using a number of ambient generators growing with the order of the DAG, $|V\Gamma|$. With this approach, it is not clear how to find \emph{finitely presented} groups $G$ being quotient-saturated: with a bounded number of ambient generators, in order to realize big DAG's, we will have to introduce infinite sets of relations to get the required not finitely-presented quotients, and then we will not be able to kill the involved generators for the subsequent quotients; therefore, we will have to mandatorily take those ugly relations into account at \emph{all} the subsequent vertices from $\Gamma$, no matter if the corresponding quotients must be finitely presented or not. 

Instead, in the next section, we will adopt a different strategy. In order to provide a large family of examples of finitely presented quotient-saturated groups, we will make use of a strong result by Olshanskii. 

\section{The main result}\label{sec: main}

Let us pay closer attention to the technical property we have used in the proof of \Cref{prop: rea} about normal closures. In general, whenever we have an extension of groups $H\leqslant G$, and a subset $R\subseteq H$, the inclusion $\normalcl{R}_H\leqslant H\cap \normalcl{R}_G$ always holds, while equality is not necessarily true. Not even in free groups: take, for example, $G=\FF_{\!\set{a,b}}$ the rank two free group with basis $\set{a,b}$, $H=\gen{a, b^{-1}ab}$ and $R=\{b^{-1}ab\}$; since~$H$ is again a rank two free group (this time with basis $\{a, b^{-1}ab\}$) we have that $\normalcl{R}_H =\gen{a^{-n}b^{-1}aba^n,\, n\in \mathbb{Z}}<H$, whereas $H\cap \normalcl{R}_{G}=H\cap \normalcl{a}_G=H$. This property about normal closures was already considered in the literature for other purposes. 

\begin{defn} \label{def: CEP}
An extension of groups, $H\leqslant G$, is said to satisfy the \defin{Congruence Extension Property} (is \defin{CEP}, for short) if, for every subset $R\subseteq H$, $\normalcl{R}_H =H\cap \normalcl{R}_G$.
\end{defn}

The proper way to understand the CEP property is as follows: since $\normalcl{R}_H \leqslant \normalcl{R}_G$, the natural inclusion $\iota \colon H\hookrightarrow G$ descends to a well-defined group homomorphism $\overline{\iota}\colon H/\normalcl{R}_H \to G/\normalcl{R}_G$, making the corresponding diagram commutative:
 $$
\begin{tikzcd}
H\arrow[r,"\iota", hook] \arrow[d,"\pi_H"', two heads] \arrow[rd,phantom, "\scriptstyle{///}" description] & G\arrow[d,"\pi_G", two heads] \\
H/\normalcl{R}_H \arrow[r,"\overline{\iota}"] & G/\normalcl{R}_G      
\end{tikzcd}
 $$
Note that $\overline{\iota}$ is injective if and only if $\normalcl{R}_H =\ker(\pi_H)=\ker(\pi_H \overline{\iota})=\ker(\iota \pi_G )$, i.e., if and only if
 \begin{equation} \label{eq: key property}
\normalcl{R}_H =H\cap \normalcl{R}_G.
 \end{equation}

It is straightforward to see that the property of being CEP is transitive: for $H\leqslant K\leqslant G$, if $H\leqslant K$ is CEP and $K\leqslant G$ is CEP, then $H\leqslant G$ is CEP as well. Moreover, if $H\leqslant G$ is CEP then $H\leqslant K$ is CEP as well.

\begin{rem} \label{rem: free factors}
As mentioned above, free factors in free groups are CEP: if $\Free[n]=L*H$ then, for every $R\subseteq H$, $\Free[n]/\normalcl{R}_{\Free[n]} =L*H/\normalcl{R}_H$ and the above homomorphism $\overline{\iota}$, sending $h\normalcl{R}_H$ to $h\normalcl{R}_{\Free[n]}$, is clearly injective. Olshanskii proved in \cite{olshanskij_sq-universality_1995} (see~\Cref{thm: Ols} below) that the CEP property shows up also in other situations far from free factors: for any hyperbolic group $G$, and any non-elementary subgroup $D\leqslant G$, one can find a free subgroup $H\leqslant D\leqslant G$ of any desired rank $n\in [2,\aleph_0]$ such that $H\leqslant D$ is CEP. This will be a crucial ingredient to prove our main result. 
\end{rem}

\begin{thm}[Olshanskii, Thm. 3 in~\cite{olshanskij_sq-universality_1995}]\label{thm: Ols}
For any hyperbolic group $G$, any non-elementary sub\-group $D\leqslant G$, and any $2\leq n\leq \aleph_0$, there exists a rank $n$ free subgroup $H\leqslant D\leqslant G$ such that $H\leqslant G$ is CEP (and so, $H\leqslant D$ is CEP).\qed
\end{thm}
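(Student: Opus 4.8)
The plan is to realize $H$ as a free subgroup of $D$ generated by loxodromic elements whose axes are so well separated, and whose translation lengths are so large, that $H$ satisfies a small cancellation condition relative to $G$; the CEP property will then fall out of this geometric separation via a van Kampen diagram argument. Throughout I would work in a fixed $\delta$-hyperbolic Cayley graph of $G$ and in its boundary $\partial G$.

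First I would extract two independent loxodromic elements from $D$. Since $D$ is non-elementary in $G$, it is neither finite nor virtually cyclic, so by the classification of isometries of hyperbolic spaces (equivalently, the Tits alternative for hyperbolic groups) it contains two infinite-order elements $g,h$ whose fixed-point pairs on $\partial G$ are disjoint. Passing to suitable powers $g^M,h^M$ and applying the ping-pong lemma already yields a free subgroup of rank $2$ inside $D$. More importantly, a non-elementary group contains infinitely many pairwise independent loxodromic elements (e.g. the conjugates $g^{M}$ by distinct powers of $h^M$), which is what lets me produce free generators for every rank $n\in[2,\aleph_0]$ at once, rather than rank $2$ only.

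The technical core is to choose the generators $\{h_i\}_i$ so that $H=\langle h_i\rangle$ is not merely free but quasiconvex and small-cancellation in $G$. Concretely, I would take the $h_i$ to be long words (or high powers) in the independent loxodromics, arranged so that their translation lengths grow while, by $\delta$-hyperbolicity, the overlaps between distinct translated axes $t\cdot\mathrm{axis}(h_i)$ stay uniformly bounded. This forces every freely reduced word in the $h_i$ to be a uniform quasigeodesic of $G$ and makes $H$ almost malnormal with controlled fellow-travelling of its cosets — precisely the hypotheses under which Olshanskii's small cancellation theory over hyperbolic groups applies. Then, to deduce CEP, I would fix an arbitrary $R\subseteq H$; the inclusion $\normalcl{R}_H\leqslant H\cap\normalcl{R}_G$ is automatic, so the task is the reverse. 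Taking $w\in H\cap\normalcl{R}_G$ and a reduced van Kampen diagram for $w$ over $G$ whose $2$-cells are labelled by defining relators of $G$ or by elements of $R^G$, the quasiconvexity and small-cancellation control confine the $R$-cells to bounded contiguity regions along the $H$-labelled boundary; a Greendlinger-type argument then rewrites $w$ as a product of $H$-conjugates of elements of $R$, i.e. $w\in\normalcl{R}_H$. Since the same $H$ works for every $R$, it is CEP in $G$, and transitivity of CEP gives $H\leqslant D$ CEP as well.

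The hard part is the middle step: turning the heuristic ``long and separated'' into a quantitatively genuine small cancellation condition relative to the ambient hyperbolic group, and — crucially — choosing $H$ once so that the diagram surgery succeeds uniformly for all relation sets $R$ simultaneously, not one $R$ at a time. This uniform geometric separation, together with the van Kampen diagram analysis that converts a global relation in $G$ into a relation internal to $H$, is where Olshanskii's machinery does the real work.
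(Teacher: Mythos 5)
First, a point of comparison: the paper does not prove this statement at all. It is quoted as Theorem~3 of Olshanskii's article on SQ-universality of hyperbolic groups and used as a black box throughout Sections~3 and~4; the statement is closed immediately with a citation and no argument. So there is no internal proof to measure your text against — what you have written is an attempt to reconstruct Olshanskii's own proof.

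As a description of the known strategy your outline is broadly faithful: one does extract independent loxodromic elements from the non-elementary subgroup $D$, pass to high powers so that the chosen generators satisfy a small cancellation condition over the hyperbolic group $G$, and then deduce the congruence extension property from a diagram analysis. But as a \emph{proof} it has a genuine gap, and you have located it yourself: everything from ``turning the heuristic `long and separated' into a quantitatively genuine small cancellation condition'' onward is asserted rather than established, and that is essentially the entire content of the theorem. Two steps in particular cannot be waved through. First, the condition needed is not a metric condition on the finitely many generators $h_i$ alone: it must guarantee that for an \emph{arbitrary}, possibly infinite, $R\subseteq H$, the induced presentation of $G/\normalcl{R}_G$ over $G$ satisfies the hypotheses of the relevant Greendlinger-type lemma. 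Since $H$ is fixed before $R$ is known, one needs a uniform statement of the form ``every subset of $H$ automatically satisfies the small cancellation condition over $G$''; your sketch names this requirement but supplies no mechanism for it. Second, the step in which the Greendlinger argument ``rewrites $w$ as a product of $H$-conjugates of elements of $R$'' is precisely the conclusion $w\in\normalcl{R}_H$ as opposed to the trivial $w\in\normalcl{R}_G$: one must produce conjugating elements lying in $H$ rather than merely in $G$, and this does not follow formally from quasiconvexity and almost malnormality — it is where the specific combinatorics of Olshanskii's diagrams enter. If your intention is to invoke that machinery, the honest move is the one the paper makes, namely to cite the theorem; if your intention is to prove it, the missing quantitative lemmas \emph{are} the proof.
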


As a consequence observe that, for a group $G$, the existence of non-abelian free subgroups $\Free[n]\simeq H\leqslant G$ such that $H\leqslant G$ is CEP does not depend on the specific rank $2\leq n\leq \aleph_0$: certainly, if $G$ contains a CEP subgroup $H\leqslant G$ isomorphic to $\Free[n]$, we can always select a free factor of rank two, $\Free[2]\leqff \Free[n]\simeq H\leqslant G$ and, by transitivity, it will be a free CEP subgroup of $G$ of rank two. And, conversely, if $G$ contains a free CEP subgroup of rank two, $\Free[2]\simeq H\leqslant G$, applying Olshanskii \Cref{thm: Ols}, we can select inside $H$ a free CEP subgroup of any desired rank $2\leq n\leq \aleph_0$, $\Free[n]\leqslant \Free[2]\simeq H\leqslant G$ which, again by transitivity, will also be CEP as a subgroup of $G$. This group property turns out to be sufficient for a finitely presented group $G$ to be quotient saturated: this is our main result. 

\begin{defn}\label{def: CEPeq}
A group $G$ is \defin{CEP-equipped} if it contains free non-abelian CEP subgroups (of some, and hence any, rank $2\leq n\leq \aleph_0$). 
\end{defn}

\begin{thm}\label{thm: main}
Let $G$ be a finitely presented group. If $G$ is  \CEPeq\  then $G$ is quotient-saturated.
\end{thm}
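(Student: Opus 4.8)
The plan is to transport into $G$ a realization that already lives in a free group, using a CEP free subgroup as a bridge. Fix a finite colored DAG $\Gamma=(V,E,c)$ of order $n=|V\Gamma|$. Since $G$ is \CEPeq, choose (\Cref{def: CEPeq}) a free subgroup $H\leqslant G$ of rank $2n$ with $H\leqslant G$ CEP. By \Cref{prop: rea}, $\Gamma$ is realizable in $\FF_{2n}\simeq H$; identify $H$ with $\FF_{2n}$ and let $\{M_v\normaleq H\}_{v\in V}$ be the realizing family. For each $v$ pick $R_v\subseteq H$ with $M_v=\normalcl{R_v}_H$, taking $R_v$ finite exactly when $c(v)=0$ (possible by \Cref{lem: pres}, since $H/M_v$ is finitely presented iff $c(v)=0$). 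Now define $N_v=\normalcl{R_v}_G\normaleq G$. The CEP hypothesis (\Cref{def: CEP}) applied to $R_v\subseteq H$ gives the key identity $M_v=\normalcl{R_v}_H=H\cap\normalcl{R_v}_G=H\cap N_v$ for every $v$. I claim $\{N_v\}$ realizes $\Gamma$ in $G$.

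Conditions (i) and (ii) of \Cref{def: qs} follow formally from $M_v=H\cap N_v$ and the fact that $\{M_v\}$ already realizes $\Gamma$ in $H$. For (i): $N_u=N_v$ forces $M_u=M_v$ upon intersecting with $H$, hence $u=v$. For (ii): if $u\leq v$ then $R_u\subseteq M_u\leqslant M_v\leqslant N_v$, so $N_u=\normalcl{R_u}_G\leqslant N_v$; conversely $N_u\leqslant N_v$ yields $M_u=H\cap N_u\leqslant H\cap N_v=M_v$, hence $u\leq v$ (this converse is exactly what encodes the non-edges of $\Gamma$).

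The content is condition (iii), for which I will use the characterization (a consequence of \Cref{lem: pres}) that, as $G$ is finitely presented, $G/N$ is finitely presented if and only if $N$ is the normal closure in $G$ of a finite set. When $c(v)=0$, $R_v$ is finite and $N_v=\normalcl{R_v}_G$, so $G/N_v$ is finitely presented. The delicate direction is $c(v)=1$, where I must show $G/N_v$ is \emph{not} finitely presented. Suppose it were. Then $N_v=\normalcl{F}_G$ for some finite $F\subseteq G$. Each $f\in F$ lies in $\normalcl{R_v}_G$, hence is a product of finitely many $G$-conjugates of elements of $R_v^{\pm1}$; collecting the finitely many elements of $R_v$ that occur produces a finite $R_0\subseteq R_v$ with $F\subseteq\normalcl{R_0}_G$, whence $N_v=\normalcl{F}_G\leqslant\normalcl{R_0}_G\leqslant N_v$ and so $N_v=\normalcl{R_0}_G$. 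Applying CEP to the finite set $R_0\subseteq H$ now gives $\normalcl{R_0}_H=H\cap\normalcl{R_0}_G=H\cap N_v=M_v$, so $M_v$ is finitely normally generated in $H$ and $H/M_v$ is finitely presented, contradicting $c(v)=1$.

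I expect the main obstacle to be exactly this last step: finite presentability of $G/N_v$ is a finiteness statement in the large group $G$, whereas the prescribed color of $v$ is a finiteness statement in the free subgroup $H$, and a priori the two need not be related (subgroups of finitely presented groups need not be finitely presented). The bridge is the compactness observation that a finitely normally generated $\normalcl{R_v}_G$ is already normally generated by a finite subset of $R_v$, combined with the CEP identity $\normalcl{R_0}_H=H\cap\normalcl{R_0}_G$; together they force finite normal generation in $G$ to reflect back to finite normal generation in $H$. This is precisely why the CEP hypothesis, rather than mere containment of a free subgroup, is what the argument requires.
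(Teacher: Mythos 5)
Your proof is correct and follows essentially the same strategy as the paper's: realize $\Gamma$ in a free CEP subgroup of rank $2|V\Gamma|$ via \Cref{prop: rea}, transport the normal closures up to $G$, and use the CEP identity together with \Cref{lem: pres} to verify conditions (i)--(iii). The only (cosmetic) difference is in condition (iii), where the paper lifts to the free group $\FF(X)$ on the presentation generators and discards the relators coming from $S$, while you work intrinsically in $G$ and shrink the finite normal generating set to a finite $R_0\subseteq R_v$ by a direct compactness argument; both routes terminate in the same application of CEP, namely $\normalcl{R_0}_H=H\cap\normalcl{R_0}_G$.
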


\begin{proof}
Let $G\simeq \langle X \mid S\,\rangle=\FF(X)/\normalcl{S}_{\FF(X)}$ be a finite presentation for $G$, i.e., $X$ is a finite alphabet, $\FF(X)$ is the free group on $X$, and $S\subseteq \FF(X)$ is a finite set of relations. Denote by $\overline{\phantom{a}}\colon \FF(X)\onto G$, $w\mapsto \overline{w}$, the quotient map.

Fix a finite colored DAG, $\Gamma =(V,E,c)$, write $n=2|V|$, and let us find a realization of $\Gamma$ in the group $G$. By hypothesis, $G$ contains a rank $n$ free CEP subgroup $\Free[n]\simeq H\leqslant G$. And, by \Cref{prop: rea}, $\Gamma$ admits a realization in $H$, say, by some normal subgroups $N_v=\normalcl{\overline{R_v}}_H \normaleq H\leqslant G$, $v\in V$, where $R_v \subseteq \FF(X)$ are certain sets of words with $\overline{R_v}\subseteq H\leqslant G$; moreover, by \Cref{lem: pres} and passing to an appropriate finite subset when necessary, we can assume $|R_v|<\infty$ whenever $H_v=H/N_v$ is finitely presented, i.e., whenever $c(v)=0$. 

Let us replicate this realization, but using the appropriate normal subgroups in $G$ instead of normal subgroups in $H$. Attach to every vertex $v\in V$ the subgroup $N'_v =\normalcl{\overline{R_v}}_G \normaleq G$. Clearly, if $u\neq v$ then $\normalcl{\overline{R_u}}_H =N_u \neq N_v =\normalcl{\overline{R_v}}_H$ and hence $N'_u =\normalcl{\overline{R_u}}_G \neq \normalcl{\overline{R_v}}_G =N'_v$ (recall that we have $\normalcl{\overline{R_u}}_H =H\cap \normalcl{\overline{R_u}}_G$ and $\normalcl{\overline{R_v}}_H =H\cap \normalcl{\overline{R_v}}_G$, because $H\leqslant G$ is CEP). This shows property~(i). 

If $u\leq v$ then $N_u \leqslant N_v$ and hence $N'_u =\normalcl{\overline{R_u}}_G =\normalcl{N_u }_G \leqslant \normalcl{N_v }_G =\normalcl{\overline{R_v}}_G =N'_v$. Conversely, if $N'_u \leqslant N'_v$ then, intersecting with $H$, we get $N_u \leqslant N_v$ and so, $u\leq v$. This shows property~(ii).

Finally, we claim that the quotient $G_v =G/\normalcl{\overline{R_v}}_G$ is finitely presented if and only if $H_v =H/\normalcl{\overline{R_v}}_H$ is so. In fact, if $H_v$ is finitely presented then, by construction, $|R_v|<\infty$ and 
 $$
G_v=G/\normalcl{\overline{R_v}}_G \simeq \FF(X)/\normalcl{S\cup R_v}_{\FF(X)}
 $$
is finitely presented as well, since $|S\cup R_v|\leqslant |S|+|R_v|<\infty$. Conversely, suppose that $G_v \simeq \FF(X)/\normalcl{S\cup R_v}_{\FF(X)}$ is finitely presented. By \Cref{lem: pres}, there is a finite subset $T_v \subseteq S\cup R_v\subseteq \FF(X)$ such that $\normalcl{T_v}_{\FF(X)}= \normalcl{S\cup R_v}_{\FF(X)}$. Now, projecting this subgroup equality down to $G$, we get $\normalcl{\overline{T_v}}_G =\normalcl{\overline{S}\cup \overline{R_v}}_G =\normalcl{\overline{R_v}}_G$. Finally, note that the possible elements from $T_v$ which belong to $S$ vanish in $\overline{T_v}$ so, $\overline{T_v}\subseteq \{1\}\cup \overline{R_v}\subseteq H$ and 
 $$
\normalcl{\overline{T_v}}_H =H\cap \normalcl{\overline{T_v}}_G =H\cap \normalcl{\overline{R_v}}_G =\normalcl{\overline{R_v}}_H.
 $$
Therefore, $H_v =H/\normalcl{\overline{R_v}}_H =H/\normalcl{\overline{T_v}}_H$ is finitely presented as well. This shows property~(iii) and completes the proof that the above $N'_v$'s determine a realization of $\Gamma$ in $G$. Since this is valid for every finite colored DAG $\Gamma$, the group $G$ is quotient-saturated.
\end{proof}

Using Olshanskii \Cref{thm: Ols}, we obtain the first known explicit examples of finitely presented quotient-saturated groups. 

\begin{cor}\label{cor: hyp}
Any non-elementary finitely presented subgroup $D$ of a hyperbolic group $G$ (and hence any non-elementary hyperbolic group itself) is quotient-saturated.
\end{cor}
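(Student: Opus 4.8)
The plan is to derive \Cref{cor: hyp} directly from \Cref{thm: main} by verifying that any non-elementary finitely presented subgroup $D\leqslant G$ of a hyperbolic group is CEP-equipped. The key observation is that \Cref{thm: Ols} delivers exactly what the definition of CEP-equipped demands: applied to the hyperbolic group $G$ and the non-elementary subgroup $D\leqslant G$, it produces a free subgroup $H\leqslant D$ of any desired rank $2\leq n\leq\aleph_0$ with $H\leqslant G$ CEP, and hence (by transitivity, since $H\leqslant D\leqslant G$) with $H\leqslant D$ CEP. Thus $D$ contains a free non-abelian CEP subgroup, which is precisely \Cref{def: CEPeq}.

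Concretely, first I would invoke \Cref{thm: Ols} with the ambient hyperbolic group $G$ and the non-elementary subgroup $D$ (taking, say, $n=2$) to obtain a free subgroup $\Free[2]\simeq H\leqslant D\leqslant G$ such that $H\leqslant G$ is CEP. Second, I would use the transitivity remark recorded just after \Cref{thm: Ols} (namely, that $H\leqslant G$ CEP implies $H\leqslant D$ CEP for any intermediate $D$) to conclude that $H$ is a free non-abelian CEP subgroup of $D$ itself. Therefore $D$ is \CEPeq. Finally, since $D$ is finitely presented by hypothesis, \Cref{thm: main} applies verbatim and yields that $D$ is quotient-saturated.

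The parenthetical claim, that a non-elementary hyperbolic group $G$ is itself quotient-saturated, is the special case $D=G$: a hyperbolic group is finitely presented, and being non-elementary it is a non-elementary subgroup of itself, so the argument above gives the conclusion directly.

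I do not anticipate any real obstacle here: the corollary is essentially a bookkeeping combination of \Cref{thm: Ols} and \Cref{thm: main}, with the only point requiring care being that \Cref{thm: Ols} must be applied to the pair $(G,D)$ rather than to $D$ in isolation — it is the hyperbolicity of the \emph{ambient} group $G$ that supplies the CEP subgroup, which then lands inside $D$. The finite presentability hypothesis on $D$ is exactly the hypothesis needed to feed into \Cref{thm: main}, and is automatic when $D=G$.
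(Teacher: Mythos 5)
Your argument is correct and coincides with the paper's own proof: both invoke \Cref{thm: Ols} for the pair $(G,D)$ to produce a free non-abelian subgroup $H\leqslant D$ with $H\leqslant D$ CEP, conclude that $D$ is \CEPeq, and then apply \Cref{thm: main} using the finite presentability of $D$. Your extra remarks on transitivity and on the special case $D=G$ only make explicit what the paper leaves implicit.
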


\begin{proof}
By Olshanskii \Cref{thm: Ols}, there exists a free subgroup of any desired  rank $2\leq n\leq \aleph_0$, $H\leqslant D\leqslant G$ such that $H\leqslant D$ is CEP, therefore, $D$ is CEP-equipped. Now, by \Cref{thm: main}, $D$ is quotient-saturated.
\end{proof}

Of course, as particular cases, non-abelian free groups $\FF_{n}$ of all possible ranks $n\geq 2$ are also quotient-saturated. 
%

\begin{rem}

Obviously, if $G$ admits $\Free[2]$ as a quotient then it is quotient-saturated. We remark that the converse is far from true: even groups having neither $\FF_2$ nor even $\FF_1=\mathbb{Z}$ as a quotient may be quotient-saturated: perfect hyperbolic groups, for example; one can easily construct such groups by writing a finite presentation with a set of relations satisfying $C'(1/6)$ (so, presenting a hyperbolic group), but having trivial abelianization, $G^{\rm ab}=1$.
\end{rem}

\medskip

Finally, we highlight the following corollary providing a new obstruction for a group $D$ to embed into a hyperbolic group; we think this could be of independent interest. 

\begin{cor} \label{cor: }
Non-elementary, finitely presented, non quotient-saturated groups $D$ do not embed in any hyperbolic group $G$. \qed
\end{cor}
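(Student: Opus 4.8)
The plan is to argue by contraposition, reducing the statement directly to \Cref{cor: hyp}. Concretely, I would suppose, toward a contradiction, that such a group $D$ --- non-elementary, finitely presented, and \emph{not} quotient-saturated --- nevertheless embeds as a subgroup into some hyperbolic group $G$. The goal is then to derive a contradiction by manufacturing quotient-saturation of $D$ out of the ambient hyperbolicity.

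First I would identify $D$ with its image under the embedding, so that $D \leqslant G$ becomes a finitely presented subgroup of a hyperbolic group. The only point requiring a moment's care is the clause ``non-elementary'': in the hyperbolic setting ``elementary'' means virtually cyclic (equivalently, finite or virtually $\ZZ$), which is an \emph{intrinsic} isomorphism-invariant property of a group. Hence $D$ being non-elementary as an abstract group is exactly the same as $D$ being a non-elementary subgroup of $G$, and this property transfers unchanged across the embedding. This is the one spot where a reader might pause, but it dissolves at once upon observing that non-elementarity does not depend on the ambient group in which $D$ sits.

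With $D \leqslant G$ now recognized as a non-elementary, finitely presented subgroup of the hyperbolic group $G$, \Cref{cor: hyp} applies verbatim and yields that $D$ is quotient-saturated --- contradicting our standing assumption that it is not. Taking the contrapositive gives precisely the claim: no non-elementary, finitely presented, non quotient-saturated group can embed into any hyperbolic group.

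I expect no genuine obstacle here, since the statement is merely the contrapositive repackaging of \Cref{cor: hyp}; its interest is conceptual rather than technical. It records that quotient-saturation is a \emph{necessary} condition (alongside finite presentability and non-elementarity) for embeddability into hyperbolic groups, and thereby furnishes the advertised new obstruction to such embeddings.
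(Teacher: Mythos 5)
Your proposal is correct and matches the paper's (implicit) argument: the corollary is stated with an immediate \qed precisely because it is the contrapositive of \Cref{cor: hyp}. Your added remark that non-elementarity (virtual cyclicity) is an intrinsic property of $D$, independent of the embedding, is a reasonable clarification but introduces nothing beyond what the paper takes for granted.
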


In particular, non-elementary finitely presented just infinite groups (including infinite simple groups) do not embed in hyperbolic groups. 

\section{Acylindrically hyperbolic groups are quotient saturated}\label{sec: acyl}

In this final section, we will further leverage our main result to extend the realm of quotient-saturated groups beyond non-elementary finitely presented subgroups of hyperbolic groups. 

The profound significance of hyperbolic groups, introduced by Gromov in 1987, has generated several attempts at generalization. In the 2010's, the family of acylindrically hyperbolic groups was introduced by Denis Osin in \cite{osin_acylindrically_2016}: although greatly extending the family of hyperbolic groups, one can prove that acylindrically hyperbolic groups still satisfy lots of properties with hyperbolic flavour. As we will see in this section, CEP-equipment is one of them. 

An action by isometries of a group $G$ on a metric space $S$ is called \textit{acylindrical} if, for every~$\epsilon >0$, there exist $R,N>0$ such that, for every two points $x,y\in S$ with $d(x,y)\geq R$, we have the inequality $|\{ g\in G \mid d(x,gx)\leq \epsilon \text{ and } d(y,gy)\leq \epsilon \}|\leq N$ (this definition was introduced by Bowditch in~\cite{bowditch_tight_2008}, while the notion of acylindricity for actions on trees dates back to Sela~\cite{sela_acylindrical_1997}). A group $G$ is \textit{acylindrically hyperbolic} if it admits a non-elementary acylindrical action on a hyperbolic space (recall that an action of a group $G$ on a hyperbolic space $S$ is \textit{elementary} if the limit set of $G$ on $\partial S$ contains at most $2$ points). We refer the reader to~\cite{osin_acylindrically_2016} for an introduction to acylindrically hyperbolic groups. This is a family of groups greatly extending that of non-elementary hyperbolic groups. A list of prominent examples of acylindrically hyperbolic groups follows (see~\cite{osin_acylindrically_2016} for more details and references): 
 \begin{itemize}
\item[(a)] non-elementary 
subgroups of
hyperbolic groups;
\item[(b)] non-virtually-cyclic relatively hyperbolic groups with proper peripheral subgroups; 
\item[(c)] the mapping class group $\operatorname{MCG}(\Sigma_{g,p})$ of a closed surface of genus $g$ with $p$ punctures, except for the cases $g=0$ and $p\leq 3$ (when these groups are finite);
\item[(d)]  $\operatorname{Out}(\FF_n)$ for $n\geq 2$;
\item[(e)] every right-angled Artin group which is neither cyclic nor directly decomposable; 
\item[(f)] $1$-relator groups with 3 or more generators;
 \end{itemize}

The goal of the present section is to prove that every finitely presented acylindrically hyperbolic group is quotient-saturated. This clearly extends \Cref{cor: hyp} since many significant subfamilies of groups in the above list contain $\mathbb{Z}^2$ and so, they do not embed into any hyperbolic group. Our goal will follow from a few known results about acylindrically hyperbolic groups, plus a property known to experts which we prove here since it seems not to be included in the existing literature. 

Dahmani--Guirardel--Osin introduced the notion of a hyperbolically embedded subgroup $H$ in a group $G$, denoted $H\hookrightarrow_{h} G$ (the precise definition is a bit technical and not relevant for our purposes; see \cite[Def.~2.1]{dahmani_hyperbolically_2016}); it turned out to be equivalent to acylindrical hyperbolicity in the following terms:

\begin{thm}[{\cite[Thm.~1.2]{osin_acylindrically_2016}}] \label{thm: Osin acy-hyp-emd}
For any group G, the following conditions are equivalent: 
 \begin{itemize}
\item[(a)] $G$ is acylindrically hyperbolic;
\item[(b)] $G$ contains a proper infinite hyperbolically embedded subgroup, $H\hookrightarrow_{h} G$. \qed
 \end{itemize}
\end{thm}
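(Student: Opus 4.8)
The plan is to prove the two implications of the equivalence separately, using loxodromic WPD elements and their elementary closures as the bridge between acylindrical actions and hyperbolically embedded subgroups, following Dahmani--Guirardel--Osin. Throughout, recall that an element $g\in G$ is \emph{loxodromic} for an action on a hyperbolic space $S$ if the orbit map $n\mapsto g^n x$ is a quasi-isometric embedding of $\mathbb{Z}$; that it is \emph{WPD} if for every $\epsilon>0$ and $x\in S$ there is $N$ with $\{h\in G: d(x,hx)\le\epsilon,\ d(g^Nx,hg^Nx)\le\epsilon\}$ finite; and that its \emph{elementary closure} $E(g)$ consists of those $h\in G$ for which $h\gen{g}h^{-1}$ lies at finite Hausdorff distance from $\gen{g}$.

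For (a)$\,\Rightarrow\,$(b) I would start from a non-elementary acylindrical action of $G$ on a hyperbolic space $S$ and argue in four steps. First, the classification of acylindrical actions rules out the parabolic and quasi-parabolic regimes, so a non-elementary such action is of \emph{general type}; in particular it contains a loxodromic element $g$, and $G$ is not virtually cyclic. Second, acylindricity is precisely a uniform strengthening of the WPD condition, so every loxodromic element---in particular $g$---is automatically WPD: once a power $g^N$ moves a basepoint farther than the acylindricity constant, the number of elements nearly fixing both endpoints is uniformly bounded. Third, for a loxodromic WPD element the elementary closure $E(g)$ is virtually cyclic; general type forces $E(g)\neq G$ (so it is proper), and $E(g)\supseteq\gen{g}\cong\mathbb{Z}$ (so it is infinite). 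Fourth, and most delicately, one verifies $E(g)\hookrightarrow_h G$ with respect to a suitable relative generating set: the relative metric on $E(g)$ is shown to be proper via nearest-point projections onto the $E(g)$-orbit, where acylindricity supplies the bounded-backtracking estimates preventing long elements of $E(g)$ from being joined by short paths in the ambient Cayley graph. This last step is the genuine technical core and is the content of \cite[Thm.~6.8]{dahmani_hyperbolically_2016}.

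For the converse (b)$\,\Rightarrow\,$(a) I would start from a proper infinite $H\hookrightarrow_h(G,X)$. By definition the relative Cayley graph $\Gamma(G,X\sqcup H)$ is Gromov hyperbolic and $G$ acts on it by left multiplication. The properness of $H$ guarantees an element $g\in G$ not conjugate into $H$ acting loxodromically on $\Gamma(G,X\sqcup H)$, the structure theory of hyperbolically embedded subgroups shows such a $g$ is WPD for this action, and the infiniteness of $H$ forces the action to be non-elementary. Thus $G$ admits a non-elementary action on a hyperbolic space with a loxodromic WPD element. The remaining and hardest task is to upgrade this WPD datum to a genuinely \emph{acylindrical} action, since the action on $\Gamma(G,X\sqcup H)$ itself need not be acylindrical (exactly as the coned-off action of a relatively hyperbolic group fails to be). Here I would invoke Osin's construction: from $g$ one again forms the now virtually cyclic $E(g)\hookrightarrow_h G$ and builds a carefully chosen relative Cayley graph on which the $G$-action can be \emph{verified} to be acylindrical and non-elementary.

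I expect the main obstacle to lie entirely in the two distance-estimate verifications---that $E(g)$ is hyperbolically embedded, and that the constructed action is acylindrical---both of which rest on delicate control of geodesics and projections in the relative Cayley graph rather than on any soft or formal argument; the two implications are otherwise linked cleanly by passing through the intermediate condition ``$G$ is not virtually cyclic and admits an action on a hyperbolic space with a loxodromic WPD element.''
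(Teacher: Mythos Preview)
The paper does not prove this theorem at all: it is quoted verbatim from \cite[Thm.~1.2]{osin_acylindrically_2016} and closed with a \qed, i.e., it is used as a black box from the literature. There is therefore no ``paper's own proof'' to compare your proposal against.

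That said, your sketch is a faithful outline of how the result is actually established in Osin's paper and in \cite{dahmani_hyperbolically_2016}: you correctly route both implications through the intermediate condition ``$G$ is not virtually cyclic and admits an action on a hyperbolic space with a loxodromic WPD element,'' using the elementary closure $E(g)$ as the candidate hyperbolically embedded subgroup for (a)$\Rightarrow$(b), and Osin's upgrade from WPD to a genuinely acylindrical action for (b)$\Rightarrow$(a). One minor imprecision: in (b)$\Rightarrow$(a) the loxodromic WPD element is typically produced \emph{from} the infinite hyperbolically embedded subgroup $H$ itself (infinite-order elements of $H$ are loxodromic on the relative Cayley graph and WPD by the structure theory in \cite{dahmani_hyperbolically_2016}), rather than by first seeking an element not conjugate into $H$. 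But this does not affect the overall architecture, and for the purposes of the present paper the theorem is in any case only being cited, not reproved.
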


Relating these two notions, they showed the following two results, clearly connecting with our interests. 

\begin{thm}[{\cite[Thm.~2.24]{dahmani_hyperbolically_2016}}] \label{thm: DGO ndg hyp emb}
Let $G$ be an acylindrically hyperbolic group. Then, for any ${n\in [2,\infty)}$, $G$ contains a hyperbollically embedded subgroup $\Fn \times K \isom H \hookrightarrow_{h} G$, where $|K| <\infty$. \qed
\end{thm}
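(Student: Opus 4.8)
The plan is to build the required subgroup from loxodromic elements of a suitable acylindrical action, using the small-cancellation theory over hyperbolically embedded subgroups developed by Dahmani--Guirardel--Osin; since the statement is exactly \cite[Thm.~2.24]{dahmani_hyperbolically_2016}, in the present paper it is merely cited, and what follows only records how its proof proceeds.

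First I would fix a non-elementary acylindrical action of $G$ on a hyperbolic space $S$, which exists by the very definition of acylindrical hyperbolicity and, through \Cref{thm: Osin acy-hyp-emd}, is the true source of hyperbolically embedded subgroups. Non-elementarity yields a loxodromic element, and to each loxodromic $g$ one associates its \emph{elementary closure} $E(g)$, the unique maximal virtually cyclic subgroup fixing the pair of endpoints of $g$ on $\partial S$; the DGO/Osin framework gives $E(g)\hookrightarrow_{h} G$. Two structural facts fix the finite factor: $G$ carries a unique maximal finite normal subgroup $K=K(G)$, and its centralizer $C=C_G(K)$ has finite index in $G$ (the conjugation action maps $G$ onto a finite subgroup of $\aut(K)$). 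In particular $C$ is again acylindrically hyperbolic and inherits loxodromic elements for the restricted action.

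Next I would manufacture independence inside $C$. Using non-elementarity I would extract loxodromic elements $g_1,\dots,g_n\in C$ that are pairwise \emph{independent} (disjoint endpoint pairs on $\partial S$), so that the collection $\{E(g_1),\dots,E(g_n)\}$ is hyperbolically embedded in $G$; this is the combination step turning a family of pairwise-independent elementary closures into a hyperbolically embedded family. Replacing each $g_i$ by a sufficiently high power $t_i=g_i^{N}$ with $N\gg 0$, a ping-pong / small-cancellation argument on $S$ shows that $\langle t_1,\dots,t_n\rangle$ is free of rank $n$ (being torsion-free, it meets the finite $K$ trivially). Since the $t_i$ were chosen in $C=C_G(K)$, the subgroup $K$ centralizes each $t_i$, whence $H:=\langle t_1,\dots,t_n\rangle\times K\isom \Fn\times K$, with $|K|<\infty$.

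The final and hardest step is to upgrade from ``$\{E(g_i)\}$ is a hyperbolically embedded \emph{family}'' to ``$H\isom\Fn\times K$ is a \emph{single} hyperbolically embedded subgroup''. I would do this by the standard DGO mechanism that merges a hyperbolically embedded collection into one hyperbolically embedded subgroup and restricts hyperbolic embeddedness to a suitable subgroup, checking that $H$ sits inside $\langle E(g_1),\dots,E(g_n)\rangle$ with the induced relative metric still proper. The principal obstacle is precisely this verification of the technical definition of $\hookrightarrow_{h}$ for $H$: the freeness of $\Fn$ is comparatively routine ping-pong, but controlling the complementary geodesics and proving properness of the $H$-relative metric on $G$ is where the weight of the Dahmani--Guirardel--Osin rotating-families / small-cancellation apparatus is genuinely needed.
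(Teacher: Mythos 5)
The paper offers no proof of this statement: it is imported verbatim from Dahmani--Guirardel--Osin \cite[Thm.~2.24]{dahmani_hyperbolically_2016} and closed with an immediate \textup{\qedsymbol}, so there is no in-paper argument to compare yours against, and you correctly recognize this. As a reconstruction of the DGO proof your outline has the right skeleton: the existence of the maximal finite normal subgroup $K(G)$ (itself part of \cite[Thm.~2.24]{dahmani_hyperbolically_2016}), the finite-index centralizer $C_G(K)$, pairwise non-commensurable loxodromic elements of an acylindrical action, their elementary closures forming a hyperbolically embedded family, and the passage to high powers so that the subgroup they generate together with $K$ is $\Fn\times K$. Two caveats. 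First, ``disjoint endpoint pairs'' is slightly weaker than what the combination theorem needs (pairwise non-commensurability of the $g_i$, i.e., no conjugate of a power of one equals a power of another); this is arrangeable but worth stating. Second, and more substantively, the step you flag as hardest --- upgrading from a hyperbolically embedded \emph{family} $\{E(g_i)\}$ to a single hyperbolically embedded subgroup $\langle t_1,\dots,t_n\rangle K\isom\Fn\times K$, with properness of the relative metric --- is precisely the content of the DGO free-product/combination theorems (their Thm.~6.14 applied to subgroups $\langle t_i\rangle\times K\leqslant E(g_i)$ avoiding a finite set, yielding an amalgam over the common central $K$), and you leave it as a black box. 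So the proposal is an annotated citation rather than an independent proof; for the purposes of this paper, where the result is used only as an external input to \Cref{prop: Osin}, that is exactly the status it has.
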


\begin{defn}
An extension of groups, $H\leqslant G$, is said to be \defin{almost-CEP} if there exists a finite subset $S\subseteq H \setminus\set{\trivial}$ such that, for every $R\subseteq H$ with $S\cap \normalcl{R}_H=\emptyset$, we have $\normalcl{R}_H =H\cap \normalcl{R}_G$.
\end{defn}

\begin{thm}[{\cite[Thm.~2.27(a)]{dahmani_hyperbolically_2016}}] \label{thm: DGO almost CEP}
Let $G$ be a group. Every hyperbolically embedded subgroup $H\hookrightarrow_{h} G$ is almost-CEP. \qed
\end{thm}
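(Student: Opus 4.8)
The plan is to recognize almost-CEP as exactly the group-theoretic content of the Dehn filling theorem of Dahmani--Guirardel--Osin. Recall that, by the commutative diagram discussed around \eqref{eq: key property}, the equality $\normalcl{R}_H = H\cap\normalcl{R}_G$ is equivalent to injectivity of the natural map $\overline{\iota}\colon H/\normalcl{R}_H \to G/\normalcl{R}_G$; that is, to the statement that the peripheral subgroup $H$ survives (embeds, after filling) inside the quotient $G/\normalcl{R}_G$. Group-theoretic Dehn filling asserts precisely that this happens whenever the kernel imposed on $H$ is chosen \emph{deep enough}, and the finite obstruction set $S$ appearing in the definition of almost-CEP is nothing but the finite set of ``short'' peripheral elements that a sufficiently deep filling must avoid.

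Concretely, I would proceed as follows. First, invoke the fact that a hyperbolically embedded subgroup $H\hookrightarrow_{h} G$ comes equipped with a relative generating set and an associated left-invariant metric $\hat d$ on $H$ whose balls are finite. The Dehn filling theorem of \cite{dahmani_hyperbolically_2016} then supplies a constant $C>0$ --- equivalently, the finite set
$$
S=\set{h\in H\setminus\Trivial \mid \hat d(\trivial,h)\leq C}
$$
--- with the following property: for every normal subgroup $N\normaleq H$ satisfying $N\cap S=\emptyset$, the natural map $H/N\to G/\normalcl{N}_G$ is injective, i.e. $N=H\cap\normalcl{N}_G$. I would take this $S$ as the finite set witnessing almost-CEP.

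It then remains to feed an arbitrary $R\subseteq H$ into this machine. Given $R$ with $S\cap\normalcl{R}_H=\emptyset$, set $N:=\normalcl{R}_H\normaleq H$; the hypothesis says exactly $N\cap S=\emptyset$, so Dehn filling yields $N = H\cap\normalcl{N}_G$. Finally, since $R\subseteq N\subseteq\normalcl{R}_G$, idempotency of the normal-closure operation gives $\normalcl{N}_G=\normalcl{\normalcl{R}_H}_G=\normalcl{R}_G$, whence
$$
\normalcl{R}_H = N = H\cap\normalcl{N}_G = H\cap\normalcl{R}_G,
$$
which is precisely the almost-CEP equality.

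The one genuinely hard ingredient is the Dehn filling theorem itself, which I am treating as a black box. Reproving it from scratch would require the cone-off construction --- coning off the cosets $gH$ in the relative Cayley graph to obtain a hyperbolic space on which $G$ acts with a suitable rotating family --- followed by a Greendlinger/small-cancellation-type analysis showing that any word over $H$ which becomes trivial in $G/\normalcl{R}_G$ already lies in $\normalcl{R}_H$, provided the filling avoids the short elements collected in $S$. By contrast, everything downstream of that theorem (choosing $S$, the idempotency identity $\normalcl{\normalcl{R}_H}_G=\normalcl{R}_G$, and the translation between injectivity and the normal-closure equation) is routine, so I expect the main obstacle to be entirely concentrated in the geometric filling step.
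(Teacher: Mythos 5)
Your proposal is correct and matches the paper's treatment: the paper states this result as a direct citation of \cite[Thm.~2.27(a)]{dahmani_hyperbolically_2016} with no proof, and your argument is exactly the unpacking of that citation --- taking $S$ to be the finite obstruction set from the Dehn filling theorem and translating from normal subgroups $N\normaleq H$ with $N\cap S=\emptyset$ to arbitrary subsets $R\subseteq H$ via $N:=\normalcl{R}_H$ and the idempotency identity $\normalcl{\normalcl{R}_H}_G=\normalcl{R}_G$. Both you and the authors treat the geometric filling theorem itself as a black box, so the approaches coincide.
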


The following improvement of \Cref{thm: DGO almost CEP} is known to experts but does not seem to be present in the literature. We include a proof for completeness.  

\begin{prop}\label{prop: Osin}
Acylindrically hyperbolic groups $G$ are CEP-equipped.
\end{prop}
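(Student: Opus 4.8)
The goal is to show that any acylindrically hyperbolic group $G$ contains a free non-abelian CEP subgroup of rank $2$ (equivalently, of any rank $2 \le n \le \aleph_0$). I have three ingredients to combine: \Cref{thm: DGO ndg hyp emb} gives a hyperbolically embedded subgroup $\Fn \times K \isom H \hookrightarrow_h G$ with $K$ finite; \Cref{thm: DGO almost CEP} tells me this $H$ is \emph{almost}-CEP; and I want to upgrade ``almost-CEP'' to genuine CEP on a suitable free subgroup. The plan is to take $H \cong \Fn \times K$ for a large enough $n$, exhibit inside it a free non-abelian subgroup $F$ that avoids the finite ``bad set'' $S$ coming from almost-CEP, and verify that the inclusion $F \leqslant G$ is honestly CEP.

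First I would fix the finite exceptional set. By \Cref{thm: DGO almost CEP}, $H \leqslant G$ is almost-CEP, so there is a finite $S \subseteq H \setminus \set{\trivial}$ such that $\normalcl{R}_H = H \cap \normalcl{R}_G$ for every $R \subseteq H$ with $S \cap \normalcl{R}_H = \emptyset$. Write $H = \Free[n] \times K$ with $|K| < \infty$, and let $p \colon H \to \Free[n]$ be the projection onto the free factor. The key observation is that the normal closure (in $H$) of a subset $R$ contained in the free factor $\Free[n] \times \set{\trivial}$ relates cleanly to its normal closure in $\Free[n]$; and crucially, I want to choose a free subgroup sitting inside $\Free[n]$ whose normal closures in $H$ systematically miss every element of $S$.

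The heart of the argument is this selection step. Project $S$ to the free factor via $p$, obtaining a finite set $p(S) \subseteq \Free[n]$; since $S \subseteq H \setminus \set{\trivial}$, each element of $S$ has either a nontrivial free-factor component or a nontrivial $K$-component. I would argue that, for $n$ large (or by a standard small-cancellation / Nielsen-type argument inside $\Free[n]$), there exists a free non-abelian subgroup $F_0 \leqslant \Free[n]$ that is a free factor of $\Free[n]$ and such that the finitely many elements of $p(S)$ do not lie in $\normalcl{R}_{\Free[n]}$ for any nonempty $R \subseteq F_0$ — for instance by placing $F_0$ in generators disjoint from the support of $p(S)$, so that $\normalcl{R}_{F_0} \leqslant F_0$ is a free factor avoiding $p(S)$. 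Setting $F = F_0 \times \set{\trivial} \leqslant \Free[n] \times K = H$, one then checks that any $R \subseteq F$ gives $\normalcl{R}_H \subseteq \normalcl{R}_{\Free[n]} \times \set{\trivial}$ (since $F$ commutes with $K$ and lies in the free factor), which by construction avoids $S$; hence the almost-CEP hypothesis applies and yields $\normalcl{R}_H = H \cap \normalcl{R}_G$. Combined with the fact that $F \leqslant \Free[n] \leqslant H$ is CEP (free factors are CEP, \Cref{rem: free factors}), transitivity of CEP gives that $F \leqslant G$ is CEP.

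I expect the main obstacle to be the selection step: making sure the chosen free subgroup $F$ genuinely avoids the finite bad set $S$ after passing through normal closures, rather than merely containing $S$-free generators. The subtlety is that $\normalcl{R}_H$ for $R \subseteq F$ can be large, so I must exploit the direct-product structure $H = \Free[n] \times K$ to confine $\normalcl{R}_H$ to the free factor and then use that $S$ meets this factor only in $p(S)$, which I have arranged to avoid. Once $F \leqslant G$ is shown CEP, $F$ is a free non-abelian CEP subgroup, so $G$ is \CEPeq\ by \Cref{def: CEPeq}, and any desired rank follows from \Cref{thm: Ols} applied within $F$ (or directly by transitivity with free factors of $F$).
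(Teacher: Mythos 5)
Your overall architecture matches the paper's: take the hyperbolically embedded subgroup $H \isom \Fn \times K$ from \Cref{thm: DGO ndg hyp emb}, invoke \Cref{thm: DGO almost CEP} to get the finite bad set $S$, and then locate a free non-abelian subgroup of the $\Fn$-factor all of whose normal closures in $H$ miss $S$, so that almost-CEP upgrades to genuine CEP via the chain $\normalcl{R}_F = F\cap\normalcl{R}_H = F\cap H\cap\normalcl{R}_G$. Your reduction of ``$\normalcl{R}_H$ misses $S$'' to a statement inside the free factor (using $\normalcl{R}_H = \normalcl{R}_{\Fn}\times\set{1}$ and handling separately the elements of $S$ with nontrivial $K$-component) is also correct. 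But the selection step, which you yourself flag as the crux, does not work as stated. You propose to take $F_0$ a rank-two \emph{free factor} of $\Fn$ ``in generators disjoint from the support of $p(S)$.'' The set $S$ is produced by \Cref{thm: DGO almost CEP} only \emph{after} $H$ (hence $n$) has been fixed, so you cannot enlarge $n$ afterwards; and for a fixed $n$ the elements of $S\cap\Fn$ are words that may involve every generator. (For $n=2$, which is all the theorem needs, the only rank-two free factor of $\Free[2]$ is $\Free[2]$ itself, and then $R=F_0$ gives $\normalcl{R}_{\Free[2]}=\Free[2]\supseteq S\cap\Free[2]$.) What you would actually need is a rank-two free factor $F_0$ with $\normalcl{F_0}_{\Fn}\cap S=\emptyset$, i.e.\ every element of $S\cap\Fn$ surviving in $\Fn/\normalcl{F_0}\isom\Free[n-2]$; no ``disjoint support'' choice guarantees this, and your parenthetical appeal to small cancellation or Nielsen moves does not establish it.

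The paper closes exactly this gap by a different device: since the free factor $H'\isom\Free[2]$ is residually finite, there is a finite-index normal subgroup $N\normaleq H'$ with $S\cap N=\emptyset$; then for any $R\subseteq N$ one has $\normalcl{R}_{H}=\normalcl{R}_{H'}\leqslant N$, so $S\cap\normalcl{R}_H=\emptyset$ holds automatically. The price is that $N$ is not a free factor, so to obtain a CEP free subgroup inside $N$ the paper applies Olshanskii's \Cref{thm: Ols} to the hyperbolic group $H'\isom\Free[2]$ and its non-elementary (finite-index) subgroup $N$, producing $M\isom\Free[2]$ with $M\leqslant H'$ CEP, and then runs precisely your transitivity chain with $M$ in place of $F$. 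If you replace your free-factor selection by this residual-finiteness-plus-Olshanskii step, the rest of your argument goes through essentially verbatim.
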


\begin{proof}
It suffices to find a free rank 2 CEP subgroup of $G$.  

By \Cref{thm: DGO ndg hyp emb} with $n=2$, and \Cref{thm: DGO almost CEP}, we have a subgroup $H\leqslant G$ such that $H=H'\times K$ with $H'\simeq \Free[2]$, $|K|<\infty$, and with $H\leqslant G$ being almost-CEP, i.e., there is a finite subset $S\subseteq H \setminus \set{\trivial}$ satisfying $\normalcl{R}_H =H\cap \normalcl{R}_G$, for every $R\subseteq H$ with $S\cap \normalcl{R}_H=\emptyset$. Now, since $H'$ is residually finite, it contains a normal finite index subgroup $N\normaleq H'$ such that $S\cap N=\emptyset$. And, by Olshanskii's \Cref{thm: Ols}, there is $M\leqslant N$ such that $M\simeq \Free[2]$ and $M\leqslant H'$ is CEP.  

Now, we claim that the extension $M\leqslant G$ is CEP. In fact, we have $M\leqslant N\normaleq H'\leqslant H'\times K=H\leqslant G$ and, for every $R\subseteq M$, $\normalcl{R}_{H}=\normalcl{R}_{H'}\leqslant N$ and so, $S\cap \normalcl{R}_{H}=\emptyset$. Therefore, $\normalcl{R}_H =H\cap \normalcl{R}_G$ and hence, $\normalcl{R}_{M}=M\cap \normalcl{R}_{H'}=M\cap \normalcl{R}_H =M\cap H\cap \normalcl{R}_G =M\cap \normalcl{R}_G$. This proves that $M\leqslant G$ is CEP, concluding the proof. 
\end{proof}

Our final claim follows directly from \Cref{prop: Osin} and \Cref{thm: main}.

\begin{thm}\label{thm: acyl hyp are qs}
Finitely presented acylindrically hyperbolic groups are quotient-saturated. \qed
\end{thm}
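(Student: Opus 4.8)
The plan is simply to combine the two engines assembled earlier in this section, since both the structural input and the bookkeeping lemma are already in place. Let $G$ be a finitely presented acylindrically hyperbolic group. First I would invoke \Cref{prop: Osin} to deduce that $G$ is \CEPeq, that is, that $G$ contains a free non-abelian CEP subgroup $\Free[2]\simeq M\leqslant G$ (and hence one of any desired rank $2\leq n\leq \aleph_0$, by the transitivity discussion preceding \Cref{def: CEPeq}). This is exactly the hypothesis demanded by our main theorem. Then, using that $G$ is by assumption finitely presented, I would apply \Cref{thm: main} verbatim to conclude that $G$ is quotient-saturated. At this level the statement is a one-line corollary, with no genuinely new obstacle to overcome.

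All the substance has already been discharged into \Cref{prop: Osin}, whose proof is where the actual work lies: it chains \Cref{thm: DGO ndg hyp emb} to locate a hyperbolically embedded $\Fn\times K$ with $|K|<\infty$, \Cref{thm: DGO almost CEP} to upgrade hyperbolic embedding to the almost-CEP property, the residual finiteness of free groups to sidestep the finite ``bad set'' $S$ by descending into a suitable finite-index normal subgroup, and Olshanskii's \Cref{thm: Ols} to extract a genuine (not merely almost) CEP free subgroup. The one conceptual point I would keep in view while applying \Cref{thm: main} is why the finite presentability of $G$ is indispensable rather than cosmetic: it is precisely the assumption that lets \Cref{lem: pres} be used in \emph{both} directions inside the proof of \Cref{thm: main}, translating finite presentability of each attached quotient $G_v=G/N'_v$ into the finiteness of its defining relation set $S\cup R_v$ and back again. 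Absent finite presentability, the double implication required by condition (iii) of \Cref{def: qs} could break, so this hypothesis cannot be dropped even though it plays no role in securing CEP-equipment.
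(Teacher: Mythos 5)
Your proposal is correct and matches the paper's own argument exactly: the theorem is stated as a direct consequence of \Cref{prop: Osin} (CEP-equipment of acylindrically hyperbolic groups) combined with \Cref{thm: main}. Your additional remarks on where finite presentability enters (via \Cref{lem: pres} in both directions inside the proof of \Cref{thm: main}) are accurate but not needed for this corollary.
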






\section*{Acknowledgements}

We are very grateful to Ashot Minasyan for bringing to our attention the CEP property, which allowed to significantly improve the scope of our first draft on quotient-saturation. We also thank Montse Casals for suggesting us the possible expansion to acylindrically hyperbolic groups, to Denis Osin for sharing with us a proof for \Cref{prop: Osin}, and to Ashot Minasyan for suggesting a convenient variation. 

\noindent The first named author thanks the support from the Universitat Politècnica de Catalunya (UPC) through a María Zambrano grant. The second named author thanks the support from UPC through a Margarita Salas grant. The three authors acknowledge support from the Spanish Agencia Estatal de Investigaci\'on through grant PID2021-126851NB-100 (AEI/ FEDER, UE).

\renewcommand*{\bibfont}{\small}
\printbibliography

\end{document}